\theoremstyle{plain}
\newtheorem{thm}{Theorem}
\numberwithin{thm}{section}
\newtheorem{prop}[thm]{Proposition}
\newtheorem{cor}[thm]{Corollary}
\theoremstyle{definition}
\newtheorem{ex}[thm]{Example}
\newtheorem{rem}[thm]{Remark}
\newtheorem{defi}[thm]{Definition}
\newcommand{\vol}{\text{vol}}
\newcommand{\conv}{\text{conv}}
\begin{document}

\bibliographystyle{abbrv}

\title[Comparing the sets of volume polynomials and Lorentzian Polynomials]{Comparing the sets of volume polynomials and Lorentzian Polynomials}

\author{Amelie Menges}
\email{Amelie.Menges@math.tu-dortmund.de}
\address{Fakultät für Mathematik, Technische Universität Dortmund, D-44227 Dortmund, Germany}

\begin{abstract}
Given $n$ convex bodies in the Euclidean space $\mathbb{R}^d$, we consider the set of homogeneous polynomials of degree $d$ in $n$ variables that can be represented as their volume polynomial. This set is a subset of the set of Lorentzian polynomials. Using our knowledge of operations that preserve the Lorentzian property, we give a complete classification of the cases when the two sets are equal.
\end{abstract}

\keywords{Lorentzian polynomials, volume polynomials, mixed volumes, convex bodies}


\maketitle

\section*{Introduction}

For $n$ convex bodies $\mathcal{K}=(K_1,\ldots,K_n)$ in $\mathbb{R}^d$, Minkowski \cite{M} proved that the volume of their linear combination with non-negative coefficients is a homogeneous polynomial 
\[\vol(x_1K_1+\ldots+x_nK_n)=\sum_{\alpha\in\Delta_n^d}\frac{d!}{\alpha!}V_\alpha(\mathcal{K})x^\alpha\]
of degree $d$. This leads us to the problem whether a given homogeneous polynomial in $n$ variables of degree $d$ and with non-negative coefficients can be represented as the volume polynomial of $n$ convex bodies in the Euclidean space $\mathbb{R}^d$. 
Over the years, this question has been thoroughly studied. Most famously, Alexandrov and Fenchel independently from each other noticed that the coefficients satisfy the \emph{Alexandrov-Fenchel inequality}
\[V_\alpha(\mathcal{K})^2\geq V_{\alpha-e_i+e_j}(\mathcal{K})V_{\alpha-e_j+e_i}(\mathcal{K})\]
for every $\alpha\in\Delta_n^d$ with $\alpha_i, \alpha_j>0$ (see \cite{A} and \cite{F}). This inequality started a whole line of further inequalities that could be deduced using the Alexandrov-Fenchel inequality and that the coefficients of volume polynomials satisfy. These are often loosely referred to as the \emph{known inequalities}. In 1938, Heine \cite{H} managed to show that these inequalities describe the set of volume polynomials completely in the case $(n,d)=(3,2)$. He further proved that they are not enough to classify the set of volume polynomials in the case $(n,d)=(4,2)$. This was generalized by Shephard \cite{Sh} who constructed an example of a homogeneous polynomial in $d+2$ many variables for degree $d$ whose coefficients satisfy all known inequalities but which cannot be represented as the volume polynomial of any $d+2$ many convex bodies. Further, he proved that the known inequalities fully describe the set of volume polynomials in two variables of any degree.

Generalizing the known inequalities, Gurvits \cite{Gur} introduced the set of strongly log-concave polynomials and showed that it contains the set of volume polynomials. Furthermore, he conjectured that the sets are equal in the case of three variables. This was disproved by Huh and Brändén (see \cite{BH} and \cite{Hu}) who used the \emph{reverse Khovanskii-Teissier inequality} \cite[Thm.~5.7]{LX} 
\[\binom{d}{i}V_{(d-i)e_1+ie_2}(\mathcal{K})V_{ie_1+(d-i)e_3}(\mathcal{K})\geq V_{de_1}(\mathcal{K})V_{ie_2+(d-i)e_3}(\mathcal{K})\]
to construct an example of a strongly log-concave polynomial that cannot be a volume polynomial. They also introduced the set of Lorentzian polynomials which equals the set of strongly log-concave polynomials in the homogeneous case as they showed (see \cite[Thm.~2.30]{BH}).

Working with Lorentzian polynomials and particularly operations that preserve the Lorentzian property, Brändén und Huh answered a question of Gurvits and proved that the product of two Lorentzian polynomials is again Lorentzian (see \cite[Cor.~2.32]{BH}). On the other hand, polynomial factors of Lorentzian polynomials generally do not have to be Lorentzian. But as there are certain cases when we can deduce that the factors are Lorentzian polynomials, we can ask if the same is true for volume polynomials. Using our results for this problem, we can generalize the polynomials constructed by Shephard \cite{Sh} and Brändén and Huh \cite{BH}\cite{Hu} which are examples for Lorentzian polynomials that cannot be volume polynomials. Thus we can fully classify the cases in which the set of volume polynomials equals the set of Lorentzian polynomials.

Our main findings can be summarized as follows.

  \begin{itemize}
    \item[1.] If a Lorentzian polynomial can be factorized into polynomials with disjoint sets of variables, these factors are again Lorentzian. (Proposition \ref{21})
    \item[2.] If a volume polynomial can be factorized into polynomials with disjoint sets of variables, these factors are again volume polynomials. (Proposition \ref{24})
    \item[3.] If a Lorentzian polynomial can be written in the form $g=x_1^df$ with $\deg_{x_1}(f)=1$, the factors are Lorentzian. (Proposition \ref{22})
    \item[4.] If a volume polynomial can be written in the form $g=x_1^df$ with $\deg_{x_1}(f)=1$, the factors are volume polynomials. (Proposition \ref{25})
    \item[5.] The set of volume polynomials equals the set of Lorentzian polynomials if and only if $n\leq 2$, $d=1$ or $(n,d)=(3,2)$. (Theorem \ref{32})
  \end{itemize}

The paper is structured as follows. Section~\ref{sec:prelim} is
devoted to preliminaries; we recall basic definitions and properties of Lorentzian polynomials as well as volume polynomials.
Section~\ref{Sec:Factors} focuses on the factors of Lorentzian (resp. volume) polynomials and the question whether they are again Lorentzian (resp. volume) polynomials. Finally, in Section~\ref{Sec:Subset} we use our prior findings to fully classify when the set of volume polynomials equals the set of Lorentzian polynomials.

\bigskip
 \noindent \textbf{Acknowledgments.}
I would like to thank Ivan Soprunov as well as Khazhgali Kozhasov for important discussions concerning the case $n=3$ and about the geometric motivation behind the \emph{reverse Khovanskii-Teissier inequality}. I would also like to thank my advisor Daniel Plaumann for his guidance.

\section{Preliminaries}\label{sec:prelim}

By a \emph{convex body} $K$ in the Euclidean space $\mathbb{R}^d$, we mean a non-empty compact convex set. Especially, the convex bodies we consider do not need to have non-empty interior and thus can be less than full-dimensional. When we talk about the dimension of a convex body, we refer to the dimension of the smallest affine space containing the convex body.

We fix some notation and terminology concerning convex bodies and their volume polynomials. As a general reference, we suggest the monograph of Schneider \cite[Ch.~1]{Schn}. Let $n$ and $d$ be positive integers. We write $[n]\coloneqq\{1,\ldots,n\}$. The space of homogeneous polynomials of degree $d$ in $n$ variables over $\mathbb{R}$ is denoted by $H_n^d$.

Let $\mathcal{K}\coloneqq(K_1,\ldots,K_n)$ be convex bodies in $\mathbb{R}^d$. Their \emph{volume polynomial} is the homogeneous polynomial
\[\vol_{\mathcal{K}}(x)\coloneqq\vol(x_1K_1+\ldots+x_nK_n)\coloneqq\sum_{\alpha\in\Delta_n^d}\frac{d!}{\alpha!}V_\alpha(\mathcal{K})x^\alpha\]
for non-negative $x_1,\ldots,x_n$ and $x\coloneqq(x_1,\ldots,x_n)$. For a multi-index $\alpha\in\Delta_n^d$ we write
\[\alpha!\coloneqq\alpha_1!\cdot\ldots\cdot\alpha_n! \hspace{20px} \text{and} \hspace{20px} x^\alpha\coloneqq x_1^{\alpha_1}\cdot\ldots\cdot x_n^{\alpha_n}.\]
We generally assume $\mathcal{K}$ to be \emph{full-dimensional}, meaning that the affine dimension of the convex body
\[\sum\mathcal{K}\coloneqq\sum_{i=1}^nK_i\]
equals $d$. This guarantees that the volume polynomial is non-zero. We further use the notation 
\[V_\alpha(\mathcal{K})=V(\mathcal{K}^\alpha)=V(K_1^{\alpha_1},\ldots,K_n^{\alpha_n})\]
to refer to the mixed volume of the convex bodies $\mathcal{K}=(K_1,\ldots,K_n)$. The set of all volume polynomials is denoted by $V_n^d$. 

The mixed volumes satisfy several useful properties of which we will only list a few here. For a more thorough understanding, we refer to the monograph of Schneider \cite[Ch.~5]{Schn}.

\begin{prop}\label{11} 
	\begin{itemize}
		\item[(a)] The mixed volumes are non-negative and symmetric as well as multi-linear in the convex bodies (see Schneider \cite[Ch.~5.1]{Schn}).
		\item[(b)] For $a_1,\ldots,a_n\in\mathbb{R}^d$ and a regular matrix $A\in\mathbb{R}^{d\times d}$, let $T_{a_i,A}$ denote the affine transformation $\mathbb{R}^d\rightarrow\mathbb{R}^d, x\mapsto Ax+a_i$. We have \[\vol(x_1T_{a_1,A}(K_1)+\ldots+x_nT_{a_n,A}(K_n))=|\det(A)| vol(x_1K_1+\ldots+x_nK_n).\] Thus we may always assume that the considered convex bodies contain the origin (see Shephard \cite[p.~126]{Sh}).
		\item[(c)] For $i\in[n]$, the volume polynomial and mixed volumes carry the information $\deg_i(\vol_{\mathcal{K}})=\dim(K_i)$ and $\vol(K_i)=V_{de_i}(\mathcal{K})$ (see Gurvits \cite[Fact~A.7]{Gur2}).
	\end{itemize}
\end{prop}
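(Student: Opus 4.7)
The three claims in the proposition are classical facts about mixed volumes; my plan is to derive each one directly from the polynomial identity that defines them, invoking Schneider's monograph only for the technical inputs needed in (a) and (c).

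For (a), symmetry of $V_\alpha$ in the bodies is immediate from the symmetry of the normalization $d!/\alpha!$ together with uniqueness of polynomial coefficients. Multi-linearity of $V_\alpha$ in each $K_i$ follows by substituting $K_i \mapsto \lambda K_i + \mu K_i'$ with $\lambda,\mu \geq 0$ into the volume polynomial and matching coefficients of $\lambda^a\mu^b$ in the resulting polynomial in $(x,\lambda,\mu)$. Non-negativity is the classical Steiner-type observation that each $V_\alpha(\mathcal{K})$ can be recovered as a non-negative linear combination of the values $\vol(\sum_i x_iK_i)$ at suitable non-negative inputs $x$.

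Part (b) is a direct computation. From $T_{a_i,A}(K_i) = AK_i + a_i$ one obtains
\[\sum_{i=1}^n x_i\, T_{a_i,A}(K_i) \;=\; A\Bigl(\sum_{i=1}^n x_iK_i\Bigr) + \sum_{i=1}^n x_ia_i,\]
so the translation by $\sum_i x_ia_i$ does not change the volume while $A$ rescales it by $|\det A|$ via the standard change-of-variables formula. The corollary that we may assume $0\in K_i$ for every $i$ follows by choosing $A=I$ and $a_i=-k_i$ for some $k_i\in K_i$.

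For (c), setting $x_j=0$ for $j\neq i$ in the defining identity kills every term except the one with multi-index $de_i$, and since $\tfrac{d!}{(de_i)!}=1$ this yields $V_{de_i}(\mathcal{K})\,x_i^d=\vol(x_iK_i)=x_i^d\vol(K_i)$, giving $\vol(K_i)=V_{de_i}(\mathcal{K})$. For the degree formula, by (b) I may translate $K_i$ into a $(\dim K_i)$-dimensional affine subspace; then any mixed volume containing more than $\dim K_i$ copies of $K_i$ must vanish, yielding $\deg_i(\vol_{\mathcal{K}})\leq\dim K_i$, while the full-dimensionality of $\sum\mathcal{K}$ provides complementary bodies producing a non-zero mixed volume at the maximal $x_i$-degree, giving the reverse inequality. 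The main obstacle is precisely this vanishing-of-mixed-volumes step in (c); this is where I would rely on the support-function/integral representation in Schneider rather than argue from first principles.
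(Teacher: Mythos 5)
The paper offers no proof of this proposition; it is a list of classical facts with citations, so any honest derivation is already more than the paper does. Parts (b) and (c) of your argument are correct: the affine computation in (b) is exactly the standard one, the identity $V_{de_i}(\mathcal{K})=\vol(K_i)$ follows by setting the other variables to zero as you say, and for the degree formula you correctly isolate the one non-trivial input (the dimension criterion for the positivity/vanishing of mixed volumes, Schneider Thm.~5.1.8) and defer to Schneider for it; the exchange argument extending a maximal set of independent segment directions in $K_i$ to a basis using segments from the other bodies does give the reverse inequality, using the standing full-dimensionality assumption. The symmetry and multilinearity arguments in (a) are also fine.

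The genuine gap is your justification of non-negativity in (a). You claim each $V_\alpha(\mathcal{K})$ is a non-negative linear combination of values $\vol\bigl(\sum_i x_iK_i\bigr)$ at non-negative inputs $x$. No such representation exists: the classical polarization (inclusion--exclusion) formula expressing a mixed volume through volumes of Minkowski sums has alternating signs, and indeed the linear functional ``extract the coefficient of $x^\alpha$'' on $H_n^d$ cannot be a non-negative combination of evaluations at points of the non-negative orthant, since there are polynomials non-negative on that orthant with a negative $\alpha$-coefficient (e.g.\ $x^2-xy+y^2$). So non-negativity of mixed volumes is not a formal consequence of the volume polynomial being non-negative where it is defined; it is a genuine theorem about convex bodies, proved in Schneider via the dimension criterion or the mixed-area-measure representation (approximation by polytopes and an explicit sum of manifestly non-negative terms). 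You should either cite that result directly, as the paper does, or replace this step with one of those actual arguments.
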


For a $k$-dimensional subspace $E\subset\mathbb{R}^d$, we denote by $\vol_E$ (resp. $V_E$) the volume (resp. the mixed volume) in the space $E$. We omit the subspace in the notation if it can be deduced from the context. By $K|E$, we denote the orthogonal projection of a convex body $K\subset\mathbb{R}^d$ onto the space $E$. If some of the convex bodies we are considering lie in a common subspace, we can use this to represent the mixed volume in $\mathbb{R}^d$ as a product of the mixed volumes in the smaller subspace and its orthogonal complement.

\begin{prop}\label{12} \cite[Thm.~5.3.1]{Schn}
	Let $E$ be a $k$-dimensional subspace of $\mathbb{R}^d$ and let $L_1,\ldots,L_k\subset E$ as well as $K_1,\ldots,K_{d-k}\subset\mathbb{R}^d$ be convex bodies. We have
	\[\binom{d}{k}V(L_1,\ldots,L_k,K_1,\ldots,K_{d-k})=V_E(L_1,\ldots,L_k)V_{E^\perp}(K_1| E^\perp,\ldots,K_{d-k}| E^\perp),\]
	where $E^\perp$ refers to the orthogonal space of $E$. 
\end{prop}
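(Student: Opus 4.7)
The plan is to compute the coefficient of $x_1 \cdots x_k \, y_1 \cdots y_{d-k}$ in the volume polynomial
\[P(x,y) \coloneqq \vol\Bigl(\sum_{i=1}^k x_i L_i + \sum_{j=1}^{d-k} y_j K_j\Bigr)\]
in two different ways. On one hand, Minkowski's expansion (the defining formula for $\vol_{\mathcal{K}}$ recalled in the introduction) identifies this coefficient as $d! \cdot V(L_1, \ldots, L_k, K_1, \ldots, K_{d-k})$, which is the left-hand side of the claim multiplied by $k!(d-k)!$.

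On the other hand, I would compute $\vol\bigl(L(x) + K(y)\bigr)$, where $L(x) \coloneqq \sum x_i L_i \subset E$ and $K(y) \coloneqq \sum y_j K_j \subset \mathbb{R}^d$, via Fubini's theorem by slicing parallel to $E$. Since $L(x) \subset E$, a sum $\ell + k$ with $\ell \in L(x)$ and $k \in K(y)$ lies in the affine subspace $E + z$ if and only if $\pi(k) = z$, where $\pi : \mathbb{R}^d \to E^\perp$ denotes the orthogonal projection; writing $M_z \coloneqq \bigl(K(y) \cap \pi^{-1}(z)\bigr) - z \subset E$, this gives
\[\vol\bigl(L(x) + K(y)\bigr) = \int_{\pi(K(y))} \vol_E\bigl(L(x) + M_z\bigr) \, dz.\]
The key observation is that the coefficient of $x_1 \cdots x_k$ in the integrand is independent of $M_z$: applying Minkowski's expansion inside $E$ to the $k+1$ bodies $L_1, \ldots, L_k, M_z$ (with $M_z$ carrying coefficient $1$), only the multi-index $(1, \ldots, 1, 0)$ can contribute to the monomial $x_1 \cdots x_k$, producing the constant $k! \cdot V_E(L_1, \ldots, L_k)$.

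Interchanging integration and coefficient extraction, the coefficient of $x_1 \cdots x_k$ in $P(x,y)$ equals $k! \cdot V_E(L_1, \ldots, L_k) \cdot \vol_{E^\perp}\bigl(\pi(K(y))\bigr)$. Since $\pi$ commutes with Minkowski sums, $\pi(K(y)) = \sum_j y_j (K_j\vert E^\perp)$, and a final application of Minkowski's expansion in $E^\perp$ identifies the coefficient of $y_1 \cdots y_{d-k}$ therein as $(d-k)! \cdot V_{E^\perp}(K_1\vert E^\perp, \ldots, K_{d-k}\vert E^\perp)$. Equating the two computations and dividing by $k!(d-k)!$ yields the claim. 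The main obstacle I expect is the independence-of-$M_z$ step, which crucially rests on the fact that $L(x)$ already involves $k$ algebraically independent variables $x_1, \ldots, x_k$ within the $k$-dimensional space $E$, so no mixed term involving $M_z$ can reach degree $k$ in $x$; the remaining ingredients — measurability of the slices $M_z$ in $z$, the validity of Fubini for bounded convex bodies, and the interchange of integration with coefficient extraction — are routine.
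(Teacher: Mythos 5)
Your argument is correct. The paper itself offers no proof of this statement --- it is quoted directly from Schneider [Thm.~5.3.1] --- so the only comparison to make is with the standard textbook derivation, which your proposal essentially reproduces: slice $L(x)+K(y)$ by the translates $E+z$ of $E$, observe that each slice is $L(x)+M_z$ (using that $\pi(\ell+\kappa)=\pi(\kappa)$ for $\ell\in L(x)\subset E$), and compare coefficients of $x_1\cdots x_k\,y_1\cdots y_{d-k}$ on both sides. The two points you flag as needing care are indeed the right ones and both go through: the monomial $x_1\cdots x_k$ already has full degree $k$ inside the $k$-dimensional space $E$, so no term involving $M_z$ can contribute to it, and the interchange of integration with coefficient extraction is justified because that extraction is a finite alternating sum of evaluations $\vol_E\bigl(\sum_{i\in S}L_i+M_z\bigr)$, each measurable in $z$ by Fubini. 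The bookkeeping $d!=\binom{d}{k}\,k!\,(d-k)!$ matches the stated normalization, so the proof is complete.
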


The mixed volumes satisfy several useful inequalities, the most famous being the \emph{Alexandrov-Fenchel inequality} (see \cite{A} and \cite{F})
\[V(K_1,\ldots,K_n)^2\geq V(K_1^2,K_3,\ldots,K_n)V(K_2^2,K_3,\ldots,K_n).\]
A generalization of polynomials with coefficients satisfying this inequality leads us to the set of Lorentzian polynomials.

\begin{defi} \label{13} (see \cite{BH})
	A subset $J\subseteq\mathbb{N}^n$ is called \emph{M-convex} if for any $\alpha, \beta\in J$ and any index $i\in[n]$ with $\alpha_i>\beta_i$, there exists an index $j\in J$ with $\alpha_j<\beta_j$ and $\alpha-e_i+e_j,~\beta-e_j+e_i\in J$. We denote by $M_n^d$ the set of all polynomials in $H_n^d$ with non-negative coefficients and M-convex support. We further define the set of \emph{Lorentzian polynomials} as $L_n^1\coloneqq M_n^1$ and for $d\geq 2$ as
	\[L_n^d\coloneqq\{f\in M_n^d~\mid~\text{for all }\alpha\in\Delta_n^{d-2}:~\mathcal{H}_{\partial^\alpha f}~\text{has at most one positive eigenvalue}\},\]
	where $\mathcal{H}_f$ refers to the Hessian of a polynomial $f\in H_n^d$.
\end{defi}

The conditions for the Hessian matrices lead to the fact that the coefficients of Lorentzian polynomials always satisfy the \emph{Alexandrov-Fenchel inequality}. In fact, we have the inclusion $V_n^d\subseteq L_n^d$.

\begin{thm}\label{14} \cite[Thm.~4.1]{BH}
	Every volume polynomial is Lorentzian.
\end{thm}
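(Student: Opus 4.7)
The plan is to verify directly that $\vol_{\mathcal{K}}$ satisfies the three conditions of Definition \ref{13}: homogeneity of degree $d$ with non-negative coefficients, M-convex support, and the Hessian condition on all partial derivatives of order $d-2$. Homogeneity is built into the definition; non-negativity of the coefficients $\tfrac{d!}{\alpha!}V_\alpha(\mathcal{K})$ is exactly Proposition \ref{11}(a). For M-convexity of $\mathrm{supp}(\vol_{\mathcal{K}})=\{\alpha\in\Delta_n^d : V_\alpha(\mathcal{K})>0\}$, I would appeal to the classical geometric characterization that $V_\alpha(\mathcal{K})>0$ if and only if each $K_i$ with $\alpha_i>0$ contains a line segment $S_i$ whose direction vectors, counted with multiplicity $\alpha_i$, span $\mathbb{R}^d$. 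Given $\alpha,\beta$ in the support and an index $i$ with $\alpha_i>\beta_i$, swapping one spanning segment between two such collections produces an index $j$ with $\alpha_j<\beta_j$ together with witnesses that $\alpha-e_i+e_j$ and $\beta-e_j+e_i$ also lie in the support.

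The substance of the proof is the Hessian condition. For $\alpha\in\Delta_n^{d-2}$, a direct computation from the definition of $\vol_{\mathcal{K}}$ yields
\[
\partial^\alpha\vol_{\mathcal{K}}(x)=\sum_{\beta\in\Delta_n^2}\frac{d!}{\beta!}V_{\alpha+\beta}(\mathcal{K})x^\beta,
\]
so $\mathcal{H}_{\partial^\alpha\vol_{\mathcal{K}}}=d!\cdot M$, where $M$ is the symmetric $n\times n$ matrix with entries $M_{ij}=V_{\alpha+e_i+e_j}(\mathcal{K})$. The Alexandrov-Fenchel inequality, applied to $K_i,K_j$ together with the bodies prescribed by $\alpha$, reads
\[
M_{ij}^2\geq M_{ii}M_{jj}\qquad\text{for all } i,j,
\]
so every $2\times 2$ principal minor of $M$ is non-positive. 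Together with $M_{ij}\geq 0$, this should force $M$ to have at most one positive eigenvalue.

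The remaining step is the linear algebra statement that a symmetric matrix with non-negative entries and non-positive $2\times 2$ principal minors has at most one positive eigenvalue. I would reduce by a continuity argument to the case that $M$ has strictly positive entries, where Perron-Frobenius supplies a simple largest eigenvalue $\lambda_1>0$ with strictly positive eigenvector $v_1$; a hypothetical second eigenvalue $\lambda_2>0$ would have an eigenvector orthogonal to $v_1$ and hence with coordinates of mixed signs, and testing the quadratic form $x^TMx$ on two well-chosen coordinate directions would contradict $M_{ii}M_{jj}\leq M_{ij}^2$.

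The main obstacle is not the bookkeeping but the appeal to the Alexandrov-Fenchel inequality itself, which is a deep theorem that I would take as a black box. The linear algebra lemma is the secondary technicality; a cleaner but more expensive alternative would be to approximate $\mathcal{K}$ by collections of smooth strongly convex bodies, where hyperbolicity of the associated quadratic form is accessible by classical PDE arguments, and then pass to the limit using continuity of mixed volumes.
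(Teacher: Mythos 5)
The paper gives no proof of this statement---it is quoted from Brändén--Huh \cite[Thm.~4.1]{BH}---but their proof does follow your architecture (verify M-convexity of the support, then the Hessian condition via Alexandrov--Fenchel), so the plan is the right one. The fatal problem is your linear algebra lemma: it is false that a symmetric matrix with non-negative entries and non-positive $2\times2$ principal minors has at most one positive eigenvalue. For instance
\[
M=\begin{pmatrix}1&2&1\\2&1&1\\1&1&1\end{pmatrix}
\]
has all entries positive and satisfies $M_{ij}^2\geq M_{ii}M_{jj}$ for all $i\neq j$, yet $\det M=-1<0$ and $\operatorname{tr}M=3>0$ force exactly one negative and two positive eigenvalues. (Your Perron--Frobenius sketch breaks precisely at the step ``testing the quadratic form on two well-chosen coordinate directions'': the finitely many coordinate inequalities do not control the form on the rest of the orthant.) What actually closes the argument is that Alexandrov--Fenchel holds for \emph{arbitrary} convex bodies, hence---by multilinearity of mixed volumes---for all non-negative Minkowski combinations of the $K_i$: one gets $(x^\top My)^2\geq(x^\top Mx)(y^\top My)$ for \emph{all} $x,y$ in the non-negative orthant, not just for coordinate vectors. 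From this reverse Cauchy--Schwarz inequality on the full orthant the one-positive-eigenvalue conclusion does follow by a short argument (if $q=x^\top Mx$ were positive definite on a $2$-plane $W$, pick $y>0$ with $q(y)>0$, choose $0\neq w\in W$ with $w^\top My=0$, and apply the inequality to $y+tw$ and $y$ for small $t$ to force $q(w)\leq0$). So you must upgrade the input from the finitely many inequalities $M_{ij}^2\geq M_{ii}M_{jj}$ to the bilinear form of Alexandrov--Fenchel; as stated, your step 3 does not prove the Hessian condition.

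A secondary, fixable gap: the M-convexity argument via ``swapping one spanning segment'' is not justified as written, because the segment collections witnessing $V_\alpha>0$ and $V_\beta>0$ are two different vector configurations, and the basis-exchange step does not automatically produce an index $j$ with $\alpha_j<\beta_j$. The standard route is the criterion that $V_\alpha(\mathcal{K})>0$ if and only if $\sum_{i\in S}\alpha_i\leq\dim\bigl(\sum_{i\in S}K_i\bigr)$ for every $S\subseteq[n]$; submodularity of $S\mapsto\dim\bigl(\sum_{i\in S}K_i\bigr)$ then exhibits the support as the set of bases of a polymatroid, which is M-convex. The computation of the Hessian as $d!\,\bigl(V_{\alpha+e_i+e_j}(\mathcal{K})\bigr)_{i,j}$ is correct.
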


As Lorentzian polynomials have been thoroughly studied, especially concerning operations that preserve their properties, we will focus on some basic notions here and refer the reader to the work of Brändén and Huh \cite{BH} for a broader understanding.

\begin{prop}\label{15}
	\begin{itemize}
		\item[(1)] The product of Lorentzian polynomials is Lorentzian (see \cite[Cor.~2.32]{BH}).
		\item[(2)] Let $A\in\mathbb{R}^{n\times m}_{\geq0}$ be a $(n\times m)$-matrix with non-negative entries. For a Lorentzian polynomial $f\in L_n^d$ and $x\coloneqq(x_1,\ldots,x_m)^\top$, we have $f(Ax)\in L_m^d$ (see  \cite[Thm.~2.10]{BH}).
	\end{itemize}
\end{prop}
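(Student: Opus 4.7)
The plan is to prove (2) first via a decomposition of non-negative linear substitutions into elementary operations, and then derive (1) from (2) by a disjoint-variables construction.

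For (2), my strategy is to decompose any non-negative matrix $A\in\mathbb{R}^{n\times m}_{\geq 0}$ into a composition of elementary substitutions at the level of polynomial substitutions: permutations of variables, non-negative scalings $x_i\mapsto c\,x_i$, insertion of zero columns (which amounts to adjoining a dummy variable not appearing in $f$), and the key building block $y\mapsto z_1+z_2$ (variable duplication). The first three operations preserve the Lorentzian class by direct inspection of the Hessian signature and the support. For the variable-duplication step, the substituted polynomial $g(z_1,z_2,x_2,\ldots,x_n)=f(z_1+z_2,x_2,\ldots,x_n)$ can be analyzed as follows. Its Hessian is a congruence $B^\top \mathcal{H}_f B$ for the simple non-negative matrix $B$ induced by the substitution, so Sylvester-type reasoning preserves the signature-one condition on $\mathcal{H}_f$. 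The partial derivatives $\partial^\alpha g$ can be handled inductively, using that $\partial_i g \in L_{n+1}^{d-1}$ reduces to the case of smaller degree via the characterization that $f\in L_n^d$ iff $\mathcal{H}_f$ has at most one positive eigenvalue and each $\partial_i f\in L_n^{d-1}$. M-convexity of $\mathrm{supp}(g)$ follows from a direct exchange-axiom argument on the expansion of $(z_1+z_2)^{\alpha_1}$.

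For (1), the plan is to first establish the disjoint-variables product: introducing a fresh copy of variables $y=(y_1,\ldots,y_n)$, the polynomial $F(x,y):=f(x)g(y)\in H_{2n}^{d+e}$ is Lorentzian. M-convexity of $\mathrm{supp}(F)=\mathrm{supp}(f)\times\mathrm{supp}(g)$ is routine from the exchange axiom applied separately in the two blocks. For the Hessian condition, one must check every partial derivative $\partial_x^\alpha\partial_y^\beta F$ with $|\alpha|+|\beta|=d+e-2$. Degree constraints force $|\alpha|\in\{d-2,d-1,d\}$. In the extremal cases $|\alpha|=d-2$ or $|\alpha|=d$, one factor becomes a non-negative constant and the Hessian is (up to a zero block) just a non-negative multiple of a Hessian of a partial derivative of $f$ or $g$, which has at most one positive eigenvalue by hypothesis. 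In the mixed case $|\alpha|=d-1$, $|\beta|=e-1$, both $\partial^\alpha f$ and $\partial^\beta g$ are linear forms with non-negative coefficients, so the Hessian of their product in $(x,y)$ has the block form
\[\begin{pmatrix} 0 & u v^\top \\ v u^\top & 0 \end{pmatrix},\]
with $u,v\in\mathbb{R}^n_{\geq 0}$. A direct spectral computation (the nonzero eigenvalues are $\pm\|u\|\|v\|$, all others are zero) shows this matrix has exactly one positive eigenvalue, confirming the Lorentzian condition.

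Having $F\in L_{2n}^{d+e}$, I apply (2) to the non-negative $2n\times n$ matrix $B=\bigl(\begin{smallmatrix} I_n \\ I_n\end{smallmatrix}\bigr)$: the substitution yields $F(Bz)=f(z)g(z)$, so $fg\in L_n^{d+e}$, which is (1). The main obstacle I anticipate is the variable-duplication step in (2), where tracking how the Hessian signature and M-convex support propagate under the block substitution requires care; the disjoint-variables computation for (1) is comparatively clean, and the reduction from $F$ to $fg$ via (2) is immediate.
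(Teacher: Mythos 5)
You are attempting something the paper does not do at all: both parts of Proposition \ref{15} are quoted from Brändén--Huh without proof, so your write-up is a self-contained argument rather than an alternative to one in the paper. Parts of it are sound: the disjoint-variables product $F(x,y)=f(x)g(y)$ is handled correctly (the product of M-convex sets in disjoint blocks is M-convex, and the mixed-case Hessian $\bigl(\begin{smallmatrix}0 & uv^\top\\ vu^\top & 0\end{smallmatrix}\bigr)$ does have nonzero eigenvalues exactly $\pm\|u\|\|v\|$), and the reduction of (1) to (2) via the matrix $B=\bigl(\begin{smallmatrix}I_n\\ I_n\end{smallmatrix}\bigr)$ is legitimate \emph{once (2) is known}.

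The genuine gap is in (2): your list of elementary operations cannot generate a general non-negative matrix. Permutations, non-negative scalings, zero-column insertions, and the duplication $y\mapsto z_1+z_2$ all have substitution matrices with at most one non-zero entry per column, and this property is closed under matrix products (each column of $AB$ equals $b_{kj}$ times a single column of $A$). So no composition of your elementary operations produces a matrix with two non-zero entries in one column --- in particular, neither a generic $A\in\mathbb{R}^{n\times m}_{\geq 0}$ nor the identification matrix $\bigl(\begin{smallmatrix}I_n\\ I_n\end{smallmatrix}\bigr)$ on which your proof of (1) relies. The missing operation is identification of variables, $g(z,x_3,\ldots,x_n)=f(z,z,x_3,\ldots,x_n)$, and it is precisely the hard one: the top-degree Hessian is still controlled by the inertia inequality $n_+(B^\top MB)\leq n_+(M)$, but the first partial $\partial_z g=(\partial_1f+\partial_2f)(z,z,x_3,\ldots)$ is a \emph{sum} of partials of $f$, so your induction on degree no longer reduces to a single lower-degree Lorentzian polynomial. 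One must first prove that non-negative combinations $\sum_i a_i\partial_i f$ of the partials of a Lorentzian polynomial are again Lorentzian, which is essentially the whole content of \cite[Thm.~2.10]{BH}; their proof runs through the analysis of sums of Lorentzian quadratic forms and of M-convex supports rather than through an elementary-matrix decomposition. For the restricted class of matrices your operations do generate, your inductive scheme is genuinely correct, since every first partial of $f(Ax)$ is then a single $(\partial_i f)(Ax)$; but that class is too small to give statement (2), and hence, by your own reduction, statement (1) as well.
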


Both of these properties can be transferred to volume polynomials.

\begin{rem}\label{16}
	\begin{itemize}
		\item[(1)] The product of volume polynomials is a volume polynomial.
		\item[(2)] Let $A\in\mathbb{R}^{n\times m}_{\geq0}$ be a $(n\times m)$-matrix with non-negative entries. For a volume polynomial $f\in V_n^d$ and $x\coloneqq(x_1,\ldots,x_m)^\top$, we have $f(Ax)\in V_m^d$ (see \cite[Ex.~1.2]{Gur})
	\end{itemize}
\end{rem}

Generally, the set of Lorentzian polynomials allows more operations that preserve it than the set of volume polynomials. 

\begin{prop}\label{17}\cite[Lem.~4.4]{BLP}
	Let $f\in L_n^d$ be a Lorentzian polynomial and let us write
	\[f(x_1,\ldots,x_n)=\sum_{i=0}^dx_n^{d-i}f_i(x_1,\ldots,x_{n-1}).\]
	Then $f_i$ is a Lorentzian polynomial of degree $i$ for every $i\in[d]$.
\end{prop}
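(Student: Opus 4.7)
The plan is to recover each $f_i$ from $f$ by combining two operations that preserve the Lorentzian property, namely the partial derivative $\partial_{x_n}$ and the evaluation $x_n \mapsto 0$. The guiding identity is the expansion
\[
\partial_{x_n}^{d-i} f(x_1, \ldots, x_n) \;=\; \sum_{j=0}^{i} \frac{(d-j)!}{(i-j)!}\, x_n^{i-j}\, f_j(x_1,\ldots,x_{n-1}),
\]
from which substituting $x_n = 0$ isolates the single surviving term $j=i$ and yields
\[
\bigl(\partial_{x_n}^{d-i} f\bigr)(x_1,\ldots,x_{n-1},0) \;=\; (d-i)!\cdot f_i(x_1,\ldots,x_{n-1}).
\]

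The evaluation $x_n \mapsto 0$ is a direct application of Proposition~\ref{15}(2), using the matrix $A \in \mathbb{R}_{\geq 0}^{n \times (n-1)}$ formed by stacking $I_{n-1}$ over a zero row; indeed $f(Ax) = f(x_1,\ldots,x_{n-1},0)$. That the partial derivative maps $L_n^d$ into $L_n^{d-1}$ is standard and follows directly from Definition~\ref{13}: the M-convexity of the support is preserved under the shift $\alpha \mapsto \alpha - e_n$ (a classical fact about M-convex sets), and for every $\beta \in \Delta_n^{d-3}$ the Hessian of $\partial^\beta(\partial_{x_n} f)$ coincides with the Hessian of $\partial^{\beta+e_n} f$, which has at most one positive eigenvalue by the Lorentzian condition on $f$ applied to the multi-index $\beta + e_n \in \Delta_n^{d-2}$.

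Iterating the partial derivative $d-i$ times gives $\partial_{x_n}^{d-i} f \in L_n^i$, and the subsequent substitution then places $(d-i)!\, f_i$ inside $L_{n-1}^i$. Since $L_{n-1}^i$ is closed under scaling by positive reals, dividing by $(d-i)!$ concludes $f_i \in L_{n-1}^i$, as desired. The only subtle point is the preservation of the Lorentzian property under $\partial_{x_n}$, which is not among the operations explicitly listed in Proposition~\ref{15} but is a well-known consequence of the definition; once this is granted, the rest of the argument is routine bookkeeping.
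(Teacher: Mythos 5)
Your argument is correct. Note that the paper does not prove this statement at all --- it is quoted from \cite[Lem.~4.4]{BLP} --- so your proposal supplies a self-contained proof where the paper only gives a citation. The identity $\partial_{x_n}^{d-i}f=\sum_{j=0}^{i}\frac{(d-j)!}{(i-j)!}x_n^{i-j}f_j$ is right, the evaluation $x_n\mapsto 0$ is indeed covered by Proposition~\ref{15}(2) with the matrix $A$ you describe, and positive rescaling is harmless. You correctly isolate the one ingredient not explicitly available in the paper, namely that $\partial_{x_n}$ maps $L_n^k$ to $L_n^{k-1}$. For the Hessian condition this is immediate from Definition~\ref{13} exactly as you say; the two points that genuinely need an outside reference are (i) that the support of $\partial_{x_n}f$, i.e.\ $\{\alpha-e_n:\alpha\in\operatorname{supp}(f),\ \alpha_n>0\}$, is again M-convex (a standard fact from discrete convex analysis, also implicit in \cite{BH} where $L_n^d$ is defined recursively via derivatives), and (ii) the base case of the iteration, where the target degree is $1$: there the Hessian argument is vacuous and one instead uses that any linear form with non-negative coefficients lies in $M_n^1=L_n^1$. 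Both are routine, and with them granted your bookkeeping goes through, including the edge cases $i=d$ (no differentiation) and $f_i=0$ (under the usual convention that the zero polynomial is admitted).
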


In contrast to Proposition \ref{15} and Remark \ref{16}, an easy example shows that Proposition \ref{17} is not necessarily true for volume polynomials. Let us consider the polynomial
\begin{align}
	f\coloneqq~&x_5^3+x_5^2(x_1+x_2+x_3+\textstyle \frac{3}{2}x_4)+x_5(x_1x_2+x_1x_3+x_1x_4+x_2x_3+x_2x_4+x_3x_4)\notag \\
	&+x_1x_2x_3+\textstyle \frac{1}{2}x_1x_2x_4+\frac{1}{2}x_1x_3x_4+\frac{1}{2}x_2x_3x_4, \notag
\end{align}
which is the volume polynomial of the convex bodies
\begin{align}
	K_1&\coloneqq\conv(0,e_1),\notag \\
	K_2&\coloneqq\conv(0,e_2),\notag \\
	K_3&\coloneqq\conv(0,e_3),\notag \\
	K_4&\coloneqq\conv(0,\frac{1}{2}(e_1+e_2+e_3)), \notag\\
	K_5&\coloneqq\conv(0,e_1,e_2,e_3,e_1+e_2,e_1+e_3,e_2+e_3,e_1+e_2+e_3). \notag
\end{align}
According to \cite[p.~119]{H}, the elementary symmetric polynomial in four variables of degree two, and thus $f_2$, cannot be represented as a volume polynomial. In Section~\ref{Sec:Subset}, we will go into more depth as to why the polynomial $f_2$ cannot be represented as the volume polynomial of any four convex bodies in $\mathbb{R}^2$.

\section{Operations} 
\label{Sec:Factors}

We have noted that the product of Lorentzian (resp. volume) polynomials is again a Lorentzian (resp. volume) polynomial. Generally, the factors of Lorentzian or volume polynomials do not have to be either. For example, the polynomial \[f\coloneqq x^3+3x^2y+3xy^2=x(x^2+3xy+3y^2)\] is Lorentzian. Because of the fact that $f$ is bivariate, it is due to Shephard (\cite[Thm.~4]{Sh}) that $f$ is a volume polynomial. As can be easily verified though, the factor $x^2+3xy+3y^2$ is not Lorentzian and thus cannot be a volume polynomial (Theorem \ref{14}).

Nevertheless, there are certain cases where the factors of Lorentzian (resp. volume) polynomials are Lorentzian (resp. volume) polynomials.

\begin{prop}\label{21}
	Let $f\coloneqq gh\in L_{n_1+n_2}^{d_1+d_2}$ be a Lorentzian polynomial with factors $g\in H_{n_1}^{d_1}$ and $h\in H_{n_2}^{d_2}$ with non-negative coefficients and in distinct variables $x_1,\ldots,x_{n_1}$ and $y_1,\ldots,y_{n_2}$. Then both factors are again Lorentzian.
\end{prop}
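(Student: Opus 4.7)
The plan is to recover $g$ from $f$ as an iterated coefficient in the $y$-variables, using the product structure $f = g(x)h(y)$ together with Proposition~\ref{17} to preserve the Lorentzian property at each step.

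Since $h$ has non-negative coefficients and is nonzero, fix a multi-index $\beta \in \Delta_{n_2}^{d_2}$ whose coefficient $c_\beta$ in $h$ is strictly positive. I would first apply Proposition~\ref{17} to $f \in L_{n_1+n_2}^{d_1+d_2}$ with $y_{n_2}$ playing the role of the last variable. The resulting coefficient polynomials in the $y_{n_2}$-expansion of $f$ are each Lorentzian, and because $f = g(x)h(y)$, every such coefficient factors as $g(x)$ times the corresponding coefficient in $h$. Extracting the term with exponent $\beta_{n_2}$ on $y_{n_2}$ then yields a Lorentzian polynomial of the form $g(x)\, h^{(1)}(y_1,\ldots,y_{n_2-1})$, where $h^{(1)}$ still has the $y_1^{\beta_1}\cdots y_{n_2-1}^{\beta_{n_2-1}}$-coefficient equal to $c_\beta$.

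Iterating with $y_{n_2-1}, y_{n_2-2}, \ldots, y_1$ in turn --- each time invoking Proposition~\ref{17} and extracting the power prescribed by $\beta$ --- strips off all $y$-variables and leaves, after $n_2$ applications, a Lorentzian polynomial in $L_{n_1}^{d_1}$ equal to $c_\beta \cdot g(x)$. Dividing by the positive constant $c_\beta$ gives $g \in L_{n_1}^{d_1}$, and exchanging the roles of $g$ and $h$ yields $h \in L_{n_2}^{d_2}$. The only bookkeeping point is verifying at each step that the extracted coefficient polynomial still has the factored form $g(x)$ times a polynomial in the remaining $y$-variables, which is immediate from $f = g \cdot h$; I do not expect any deeper obstacle.
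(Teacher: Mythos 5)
Your argument is correct, but it proceeds differently from the paper's proof. You iterate Proposition~\ref{17}: expanding $f=g(x)h(y)$ in the last $y$-variable, each coefficient polynomial is Lorentzian and of the form $g$ times the corresponding coefficient of $h$; choosing the exponents prescribed by a fixed $\beta\in\Delta_{n_2}^{d_2}$ with $c_\beta>0$ and peeling off the $y$-variables one at a time leaves $c_\beta\, g$, which is Lorentzian, hence so is $g$ (positive scalars clearly preserve the class, and the extracted polynomial is nonzero at every stage since $c_\beta>0$). The paper instead first treats the special case $f=x_{n+1}^{d_1}g$ directly — M-convexity of the support of $g$ is inherited from $f$, and the Hessian of $\partial^\alpha g$ is a positive multiple of the Hessian of $\partial^{(\alpha,d_1)}f$ — and then reduces the general case to this one via the substitution $f(A\,\cdot\,)=y^{d_1}h$ from Proposition~\ref{15}(2), which collapses all variables of $g$ onto a single variable. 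Your route is shorter and avoids both the Hessian computation and the substitution matrix, at the cost of importing the (deeper) coefficient-extraction result of Brändén--Leake--Pak quoted as Proposition~\ref{17}; the paper's route is more self-contained and its special case $x_{n+1}^{d_1}g$ is reused as the target of the reduction, which is why the author sets it up separately. Two small points worth making explicit in your write-up: Proposition~\ref{17} is stated for the last variable, so you should note that the Lorentzian property is invariant under permuting variables before applying it to each $y_j$ in turn; and the extracted index $i$ at every stage is at least $d_1\geq 1$, so you never need the excluded case $i=0$ of that proposition.
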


\begin{proof}
	First, we look at the case that the polynomial $f$ is of the form $x_{n+1}^{d_1}g\in L_{n+1}^{d_1+d_2}$ for a polynomial $g\in H_n^{d_2}$ in the variables $x_1,\ldots,x_n$. As the support of $f$ is M-convex, the same is true for the support of $g$. Let $\alpha\in\Delta_n^{d_2-2}$ and take a look at the Hessian of $\partial^\alpha g$ which is precisely the Hessian of $\frac{1}{d_1!}\partial^{(\alpha,d_1)}f=\frac{1}{d_1!}\partial^{\alpha}\partial_{n+1}^{d_1}f$. From this, we immediately conclude that the Hessian of $\partial^\alpha g$ has at most one positive eigenvalue and that $g$ is in $L_n^{d_2}$.
	
	For the general case, let $f\coloneqq gh\in L_{n_1+n_2}^{d_1+d_2}$ be a Lorentzian polynomial with $g$ and $h$ having distinct variables. We define the $(n_1+n_2)\times(n_2+1)$-matrix
	\[A\coloneqq\begin{pmatrix}
		\frac{1}{|g|_1} & 0 &\ldots & \ldots & 0 \\
		\vdots & \vdots & \ddots & \ddots & \vdots \\
		\frac{1}{|g|_1} & 0 &\ldots & \ldots & 0 \\
		0 & 1 & 0 & \ldots & 0 \\
		\vdots & 0 & \ddots & \ddots & \vdots \\
		\vdots & \vdots & \ddots & 1 & 0 \\
		0 & 0& \ldots & 0 & 1 
	\end{pmatrix},\]
	where $|g|_1$ defines the sum of the (non-negative) coefficients of $g$. Taking this matrix we know that the polynomial
	\[f\left(A\begin{pmatrix}
		y \\ x_1 \\ \vdots \\ x_{n_2}   
	\end{pmatrix}\right)=y^{d_1}h\]
	is Lorentzian by Proposition \ref{15}. According to the case noted above, $h$ is a Lorentzian polynomial and analogously, the same is true for $g$.
\end{proof}

The next proposition shows that we can skip the restriction of distinct variables if we restrict the degree of the (only) common variable of the two factors.

\begin{prop}\label{22}
	Let $f\coloneqq x_1^{d_1}g\in L_n^{d_1+d_2}$ be a Lorentzian polynomial with a polynomial $g\in H_n^{d_2}$ such that $\deg_1(g)\leq1$. Then the polynomial $g$ is also Lorentzian.
\end{prop}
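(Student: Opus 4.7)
The plan is to exhibit $g$ as a non-negative diagonal substitution of a Lorentzian polynomial, and then invoke Proposition~\ref{15}(2). Since $\deg_1(g)\leq 1$, we may write $g = g_0 + x_1 g_1$ with $g_0 \in H_{n-1}^{d_2}$ and $g_1 \in H_{n-1}^{d_2-1}$ polynomials in $x_2,\ldots,x_n$ only, so that $f = x_1^{d_1} g_0 + x_1^{d_1+1} g_1$. Setting
\[
h \;:=\; \tfrac{1}{d_1!}\,\partial_{x_1}^{d_1} f \;=\; g_0 + (d_1+1)\,x_1\, g_1
\]
and $A := \mathrm{diag}\bigl(\tfrac{1}{d_1+1},1,\ldots,1\bigr) \in \mathbb{R}^{n\times n}_{\geq 0}$, one has $g(x) = h(Ax)$. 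By Proposition~\ref{15}(2) it therefore suffices to prove $h \in L_n^{d_2}$.

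To show $h$ is Lorentzian I would verify the two conditions of Definition~\ref{13}. The Hessian condition is immediate: for any $\beta \in \Delta_n^{d_2-2}$ one has $\partial^\beta h = \tfrac{1}{d_1!}\,\partial^{\beta+d_1 e_1} f$ and $\beta + d_1 e_1 \in \Delta_n^{d_1+d_2-2}$, so the Hessian of $\partial^\beta h$ is a positive scalar multiple of the Hessian of $\partial^{\beta+d_1 e_1} f$, which has at most one positive eigenvalue because $f \in L_n^{d_1+d_2}$.

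For M-convexity, observe that $\mathrm{supp}(h) = \mathrm{supp}(f) - d_1 e_1$ and that every $\beta \in \mathrm{supp}(h)$ satisfies $\beta_1 \in \{0,1\}$. Given $\beta,\gamma \in \mathrm{supp}(h)$ and an index $i$ with $\beta_i > \gamma_i$, applying M-convexity of $\mathrm{supp}(f)$ to $\beta + d_1 e_1$ and $\gamma + d_1 e_1$ produces a swap index $j$ with $\beta_j < \gamma_j$ and $\beta - e_i + e_j + d_1 e_1,\, \gamma - e_j + e_i + d_1 e_1 \in \mathrm{supp}(f)$. A short case analysis on whether $i$ or $j$ equals $1$ then shows that $\beta - e_i + e_j$ and $\gamma - e_j + e_i$ both still have $x_1$-coordinate in $\{0,1\}$ and hence lie in $\mathrm{supp}(h)$.

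The delicate step, and the place where the hypothesis $\deg_1(g)\leq 1$ is indispensable, is the case $j = 1\neq i$ of the M-convexity check: the requirement $\beta_j < \gamma_j$ combined with $\beta_1,\gamma_1 \in \{0,1\}$ forces $\beta_1=0$ and $\gamma_1=1$, so that the shifted index $\beta - e_i + e_1$ has $x_1$-coordinate equal to $1$ and stays within $\mathrm{supp}(h)$. Without the degree bound this swap can push us to $x_1$-coordinate $2$, outside $\mathrm{supp}(h)$, which is consistent with the bivariate counterexample $x^3+3x^2 y+3xy^2 = x(x^2+3xy+3y^2)$ given before Proposition~\ref{21}, where $\deg_1$ of the quotient equals $2$ and the quotient is indeed not Lorentzian.
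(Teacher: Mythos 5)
Your proof is correct and follows essentially the same route as the paper: your $h=\tfrac{1}{d_1!}\partial_1^{d_1}f$ is exactly the paper's auxiliary polynomial $(1+d_1x_1\partial_1)g$, and your diagonal substitution $g=h(Ax)$ combined with Proposition~\ref{15}(2) is a clean repackaging of the paper's congruence $\mathcal{H}=D\mathcal{H}'D$ with $D=\mathrm{diag}(d_1+1,1,\dots,1)$, absorbing its case split on $\alpha_1$. One small misattribution in your closing paragraph: the hypothesis $\deg_1(g)\le 1$ is not what rescues M-convexity, since $\mathrm{supp}(g)=\mathrm{supp}(f)-d_1e_1$ is a translate of an M-convex set and is therefore M-convex for \emph{any} quotient $g$ (indeed, in the counterexample $x(x^2+3xy+3y^2)$ the quotient has M-convex support but fails the Hessian condition); the degree bound is needed precisely so that the single rescaling $x_1\mapsto x_1/(d_1+1)$ recovers $g$ from $h$.
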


\begin{proof}
	The support of $g$ is clearly M-convex. We take an $\alpha\in\Delta_n^{d_2-2}$ and get 
	\[\partial^{\alpha+d_1e_1}f=\partial^\alpha d_1!(1+d_1x_1\partial_1)g.\] The Hessian of $\partial^{\alpha+d_1e_1}f$ has at most one positive eigenvalue, so the same goes for the Hessian of $\partial^\alpha(1+d_1x_1\partial_1)g$. As $\deg_1(g)\leq 1$, we may assume $\alpha_1\in\{0,1\}$ without loss of generality. In the case of $\alpha_1=1$, we have
	\[\partial^\alpha (1+d_1x_1\partial_1)g=(1+d_1)\partial^\alpha g\] and we know that the Hessian of $\partial^\alpha g$ has at most one positive eigenvalue. If $\alpha_1=0$, the Hessian matrices $A$ of $\partial^\alpha(1+d_1x_1\partial_1)g$ and $B$ of $\partial^\alpha g$ satisfy the relation 
	\[A=\text{diag}(d_1+1,1\ldots,1)\cdot B\cdot\text{diag}(d_1+1,1,\ldots,1).\]
	As this does not change the number of positive eigenvalues, $g$ must be Lorentzian.
\end{proof}

Using the same technique as in the proof of Proposition \ref{21}, we can deduce the following corollary.

\begin{cor}\label{23}
	Let $f\coloneqq gh\in L_{n_1+n_2-1}^{d_1+d_2}$ be a Lorentzian polynomial, such that $g\in H_{n_1}^{d_1}$ and $h\in H_{n_2}^{d_2}$ share (exactly) one variable $x_1$ and the polynomial $h$ has at most degree $1$ in $x_1$. Then $h$ is Lorentzian.
\end{cor}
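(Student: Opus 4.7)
The plan is to combine the collapsing-substitution trick from the proof of Proposition \ref{21} with Proposition \ref{22}. The idea is to use a non-negative linear substitution to turn $g$ into a pure power of the shared variable $x_1$, so that $f$ becomes, up to a positive scalar, $x_1^{d_1}h$; Proposition \ref{22} then applies directly, thanks to the hypothesis $\deg_{x_1}(h)\le 1$.

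Concretely, I would denote the variables of $g$ by $x_1,\ldots,x_{n_1}$ and those of $h$ by $x_1,y_1,\ldots,y_{n_2-1}$, so that $f$ lives in the $n_1+n_2-1$ variables $x_1,\ldots,x_{n_1},y_1,\ldots,y_{n_2-1}$. Introduce $n_2$ new variables $u_1,\ldots,u_{n_2}$ and let $A$ be the non-negative $(n_1+n_2-1)\times n_2$ matrix encoding the substitution
\[
x_1\mapsto u_1,\qquad x_i\mapsto u_1\ \ (2\le i\le n_1),\qquad y_j\mapsto u_{j+1}\ \ (1\le j\le n_2-1).
\]
By Proposition \ref{15}(2), the polynomial $f(Au)$ lies in $L_{n_2}^{d_1+d_2}$. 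Since $g$ is homogeneous of degree $d_1$, we have $g(u_1,\ldots,u_1)=g(1,\ldots,1)\,u_1^{d_1}=|g|_1\,u_1^{d_1}$, with $|g|_1>0$ because $g$ is a nonzero polynomial with non-negative coefficients. Hence
\[
f(Au)=|g|_1\,u_1^{d_1}\,h(u_1,u_2,\ldots,u_{n_2}),
\]
and since Lorentzianness is invariant under multiplication by positive scalars, $u_1^{d_1}h$ itself lies in $L_{n_2}^{d_1+d_2}$.

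Finally, since $\deg_{u_1}(h)=\deg_{x_1}(h)\le 1$, Proposition \ref{22} applied to $u_1^{d_1}h$ yields that $h$ is Lorentzian, which is the desired conclusion. There is no serious obstacle here: the only point requiring a little care is to set up $A$ so that the shared variable $x_1$ survives the substitution — i.e.\ the first row of $A$ must agree with the next $n_1-1$ rows that collapse $x_2,\ldots,x_{n_1}$ into $u_1$ — since otherwise the $x_1$-dependence of $h$, and with it the applicability of Proposition \ref{22}, would be destroyed.
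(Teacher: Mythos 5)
Your proof is correct and is essentially the argument the paper intends: the paper derives Corollary \ref{23} by exactly this collapsing non-negative substitution from the proof of Proposition \ref{21} (sending all variables of $g$ to the shared variable, up to a positive scalar) followed by an application of Proposition \ref{22}. Your observation that the row for $x_1$ must agree with the rows collapsing $x_2,\ldots,x_{n_1}$ so that $h$'s dependence on $x_1$ survives is exactly the right point of care.
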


\begin{rem}
	Originally, we looked at the special case of multiaffine factors and proved that for a Lorentzian polynomial $f\coloneqq gh\in L_{n_1+n_2-1}^{d_1+d_2}$ with multiaffine factors $g\in H_{n_1}^{d_1}$ and $h\in H_{n_2}^{d_2}$ sharing one variable, both factors are again Lorentzian. Motivated by this, it was possible to generalize the techniques we used and thus come to the proof of Proposition \ref{22} and Corollary \ref{23}.
\end{rem}

In order to draw the connection to volume polynomials, we need to use different techniques. Basically, we can use the geometric aspects of the given convex bodies to transfer the results to volume polynomials.

\begin{prop}\label{24}
	Let $f\coloneqq gh\in V_{n_1+n_2}^{d_1+d_2}$ be a volume polynomial with factors $g\in H_{n_1}^{d_1}$ and $h\in H_{n_2}^{d_2}$ with non-negative coefficients and in distinct variables $x_1,\ldots,x_{n_1}$ and $y_1,\ldots,y_{n_2}$. Then both factors are again volume polynomials.
\end{prop}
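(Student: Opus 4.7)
The plan is to use Proposition~\ref{12} (the product formula for mixed volumes across complementary subspaces) to extract realizations of $g$ and $h$ from any given realization of $f$. I would start with convex bodies $\mathcal{K}=(K_1,\ldots,K_{n_1})$ and $\mathcal{L}=(L_1,\ldots,L_{n_2})$ in $\mathbb{R}^{d_1+d_2}$ that realize $f$, translated so that each contains the origin (Proposition~\ref{11}(b)), and set $E_K$, $E_L$ to be the linear hulls of $\sum K_i$ and $\sum L_j$, with dimensions $k$ and $\ell$. The structural claim driving everything is that in fact $k=d_1$, $\ell=d_2$, and $E_K\oplus E_L=\mathbb{R}^{d_1+d_2}$.

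One direction is immediate: since $gh$ is nonzero, there exist $\alpha\in\Delta_{n_1}^{d_1}$ and $\beta\in\Delta_{n_2}^{d_2}$ with $g_\alpha h_\beta>0$, hence $V(\mathcal{K}^\alpha,\mathcal{L}^\beta)>0$. The standard segment criterion for positivity of mixed volumes then produces $d_1$ linearly independent segments inside $E_K$ and $d_2$ inside $E_L$ jointly spanning $\mathbb{R}^{d_1+d_2}$, which forces $k\geq d_1$, $\ell\geq d_2$, $E_K+E_L=\mathbb{R}^{d_1+d_2}$, and $\dim(E_K\cap E_L)=k+\ell-(d_1+d_2)$.

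Ruling out $k>d_1$ is the main obstacle. Assuming $k>d_1$, I would apply Proposition~\ref{12} with the subspace $E_K$: for any $(\alpha',\beta')\in\Delta_{n_1}^{k}\times\Delta_{n_2}^{d_1+d_2-k}$ the formula gives $\binom{d_1+d_2}{k}V(\mathcal{K}^{\alpha'},\mathcal{L}^{\beta'})=V_{E_K}(\mathcal{K}^{\alpha'})\cdot V_{E_K^\perp}((\mathcal{L}|E_K^\perp)^{\beta'})$. The left side is a constant multiple of the coefficient of $x^{\alpha'}y^{\beta'}$ in $f=gh$ and hence vanishes, since $f$ has total $x$-degree exactly $d_1<k$. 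On the other hand, $\dim E_K=k$ makes $\vol_{E_K}(\sum x_iK_i)$ a nonzero polynomial of degree $k$, so some factor $V_{E_K}(\mathcal{K}^{\alpha'})$ is strictly positive. Hence $V_{E_K^\perp}((\mathcal{L}|E_K^\perp)^{\beta'})=0$ for every $\beta'\in\Delta_{n_2}^{d_1+d_2-k}$, giving $\dim(\sum L_j|E_K^\perp)<d_1+d_2-k$; but a direct computation from $E_K+E_L=\mathbb{R}^{d_1+d_2}$ yields $\dim(\sum L_j|E_K^\perp)=\ell-\dim(E_L\cap E_K)=d_1+d_2-k$, a contradiction. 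So $k=d_1$, and symmetrically $\ell=d_2$.

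Once the complementary decomposition is in place, the conclusion should drop out. Proposition~\ref{12} applied in bidegree $(d_1,d_2)$ gives $f_{\alpha\beta}=\tilde g_\alpha\tilde h_\beta$ for $\tilde g:=\vol_{E_K}(\sum x_iK_i)$ and $\tilde h:=\vol_{E_K^\perp}(\sum y_j(L_j|E_K^\perp))$, both nonzero because the projection $E_L\to E_K^\perp$ is an isomorphism. Matching $\tilde g_\alpha\tilde h_\beta=g_\alpha h_\beta$ across all bidegrees forces $\tilde g=cg$ and $\tilde h=c^{-1}h$ for a unique $c>0$. Identifying $E_K$ and $E_K^\perp$ isometrically with $\mathbb{R}^{d_1}$ and $\mathbb{R}^{d_2}$ and uniformly rescaling $K_i\mapsto c^{-1/d_1}K_i$ (respectively $L_j|E_K^\perp\mapsto c^{1/d_2}(L_j|E_K^\perp)$) would exhibit $g\in V_{n_1}^{d_1}$ and $h\in V_{n_2}^{d_2}$ as volume polynomials.
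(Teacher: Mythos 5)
Your proof is correct, but it takes a different route from the paper's. The paper first treats the special case $f=x_{n+1}^{d_1}g$, where the geometry is easy: the single body attached to $x_{n+1}$ spans a $d_1$-dimensional subspace $E$, the vanishing of the mixed volumes $V(\mathcal{K}^\alpha,K_{n+1}^{d_1-1})$ lets one assume the remaining bodies lie in $E^\perp$, and Proposition~\ref{12} then splits the coefficients. The general two-factor case is then reduced to this one by the substitution that collapses all of $g$'s variables into a single variable $y$ (the same matrix trick used for Proposition~\ref{21}), invoking Remark~\ref{16} to stay inside the class of volume polynomials. You instead attack the general case head-on: you show that the linear hulls $E_K$ and $E_L$ of the two families of bodies have the exact dimensions $d_1$ and $d_2$ and are complementary, which requires the segment criterion for positivity of mixed volumes (to get $E_K+E_L=\mathbb{R}^{d_1+d_2}$ and the lower bounds $k\geq d_1$, $\ell\geq d_2$) plus a dimension count against Proposition~\ref{12} to rule out $k>d_1$; your final matching-of-coefficients and rescaling step is sound, including the determination of the constant $c$ from a single positive coefficient of $h$. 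What your approach buys is a self-contained geometric structure theorem for any realization of a product with disjoint variables (the two families of bodies necessarily live in complementary subspaces), at the cost of importing the positivity criterion for mixed volumes, which the paper never needs; the paper's reduction is shorter and runs in parallel with its Lorentzian analogue. Both arguments hinge on Proposition~\ref{12}, so the core mechanism is shared even though the reductions differ.
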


\begin{proof}
	We proceed as we did in the case of Lorentzian polynomials and first assume that $f\coloneqq x_{n+1}^{d_1}g\in V^{d_1+d_2}_{n+1}$ is a volume polynomial with $g\in H_n^{d_2}$ being a polynomial in the variables $x_1,\ldots,x_n$. Let $\mathcal{K}\coloneqq (K_1,\ldots,K_{n+1})$ be the convex bodies in $\mathbb{R}^{d_1+d_2}$ associated to the corresponding variables $x_1,\ldots,x_{n+1}$. As we have $\deg_{n+1}f=d_1$, the convex body $K_{n+1}$ must lie in a $d_1$-dimensional subspace $E\subseteq\mathbb{R}^{d_1+d_2}$. Because of
	\[V(K_1^{\alpha_1},\ldots,K_n^{\alpha_n},K_{n+1}^{d_1-1})=0\]
	for all $\alpha\in\Delta_n^{d_2+1}$, we can assume $K_1,\ldots,K_n\subseteq E^\perp$.
	Now we take $\alpha\in\Delta_n^{d_2}$ and we have
	\begin{align}
		V_{(\alpha,d_1)}(K)&=\binom{d_1+d_2}{d_1}^{-1}V(K_{n+1}^{d_1})V(K_1^{\alpha_1},\ldots,K_n^{\alpha_n}) \notag \\
		&=\frac{d_1!d_2!}{(d_1+d_2)!}V(K_{n+1}^{d_1})V(K_1^{\alpha_1},\ldots,K_n^{\alpha_n}) \notag
	\end{align}
	by Proposition \ref{12}. For the volume polynomial $f$, this leads to 
	\begin{align}
		f&=\sum_{\alpha\in\Delta_n^{d_2}}\frac{(d_1+d_2)!}{\alpha!d_1!}V_{(\alpha,d_1)}(\mathcal{K})x^\alpha x_{n+1}^{d_1} \notag \\
		&=x_{n+1}^{d_1}\sum_{\alpha\in\Delta_n^{d_2}}\frac{d_2!}{\alpha!} \vol_{d_1}(K_{n+1})V(K_1^{\alpha_1},\ldots,K_n^{\alpha_n})x^\alpha \notag \\
		&=x_{n+1}^{d_1} \vol_{d_1}(K_{n+1})\vol(x_1K_1+\ldots+x_nK_n), \notag 
	\end{align}
	where $\vol_{d_1}$ refers to the $d_1$-dimensional volume in the subspace $E$ of $\mathbb{R}^{d_1+d_2}$. Thus $g$ is a volume polynomial. \\
	For the general case, we use a transformation of the variables as before and obtain the result.
\end{proof}

\begin{prop}\label{25}
	Let $f\coloneqq x_1^{d_1}g\in V_n^{d_1+d_2}$ be a volume polynomial with a polynomial $g\in H_n^{d_2}$ such that $\deg_1(g)\leq1$. Then the polynomial $g$ is also a volume polynomial.
\end{prop}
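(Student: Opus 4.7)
The plan is to parallel the proof of Proposition~\ref{24}: realize $f = x_1^{d_1} g$ as a volume polynomial and then construct an explicit volume representation of $g$. Let $\mathcal{K} = (K_1, \ldots, K_n)$ in $\mathbb{R}^{d_1+d_2}$ realize $f$. By Proposition~\ref{11}(c), $\dim K_1 = \deg_1(f) = d_1 + \deg_1(g) \in \{d_1, d_1+1\}$. If $\deg_1(g) = 0$, then $g$ involves only $x_2, \ldots, x_n$, so $f = x_1^{d_1} \cdot g$ is a product in disjoint variables and Proposition~\ref{24} gives $g \in V_n^{d_2}$.

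So assume $\deg_1(g) = 1$, in which case $\dim K_1 = d_1+1$, and after translation (Proposition~\ref{11}(b)) $K_1$ lies in a linear subspace $E$ of dimension $d_1+1$. Writing $g = x_1 g_1 + g_0$ with $g_1 \in H_{n-1}^{d_2-1}$ and $g_0 \in H_{n-1}^{d_2}$, I plan to show that $K_2, \ldots, K_n$ may be translated into a common $d_2$-dimensional linear subspace $F$. Then $F \cap E = \mathbb{R} v$ for a unit vector $v$, and the orthogonal projection induces a linear isomorphism $\phi \colon v^\perp \cap F \to E^\perp$ between the two $(d_2-1)$-dimensional spaces.

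Two applications of Proposition~\ref{12} then yield
\[
g_0 = \vol_{F^\perp}(K_1 \mid F^\perp) \cdot \vol_F\Bigl(\sum_{i \geq 2} x_i K_i\Bigr), \qquad g_1 = \vol_E(K_1) \cdot \vol_{E^\perp}\Bigl(\sum_{i \geq 2} x_i (K_i \mid E^\perp)\Bigr),
\]
the first using $F$ (with $d_2$ bodies from $K_2, \ldots, K_n$) and the second using $E$ (with $d_1 + 1$ copies of $K_1$). Setting $\alpha := \vol_{F^\perp}(K_1 \mid F^\perp)$, I will realize $g$ in $F \cong \mathbb{R}^{d_2}$ by taking $L_i := A(K_i)$ for $i \geq 2$, where $A \colon F \to F$ is a linear self-map with $|\det A| = \alpha$, and $L_1 := A([0, rv])$ with $r := \vol_E(K_1) \cdot |\det \phi| / \alpha$. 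Since $(K_i \mid E^\perp) = \phi(K_i \mid v^\perp \cap F)$ and a further application of Proposition~\ref{12} in $F$ (with the one-dimensional subspace $\mathbb{R} v$) handles the $L_1$-linear contribution, a coefficient-by-coefficient check confirms $\vol_F\bigl(\sum_i x_i L_i\bigr) = g$.

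The main obstacle is justifying the claim that $K_2, \ldots, K_n$ lie in a common $d_2$-dimensional subspace $F$. For $d_1 = 1$, this follows from the strong vanishing $V(K^\beta) = 0$ for $|\beta| = d_2 + 1$ (encoded in $f = x_1 g$), which forces any $d_2+1$ of the bodies $K_2, \ldots, K_n$ to have combined direction space of dimension at most $d_2$. For $d_1 \geq 2$, the vanishing of $V(K^\beta)$ alone is weaker, and I would combine it with the vanishings $V(K_1^{d_1-1}, K^\beta) = 0$ for $|\beta| = d_2+1$: a matroid-theoretic analysis of mixed-volume positivity, exploiting $\dim K_1 = d_1+1$, should still constrain the direction spaces of $K_2, \ldots, K_n$ to a common $d_2$-dimensional subspace, possibly after passing to a different realization of $f$. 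Making this step rigorous is the key technical difficulty.
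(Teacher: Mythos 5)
Your proposal follows essentially the same route as the paper's proof: realize $f$ by bodies $K_1,\dots,K_n$, note $\dim K_1=d_1+1$ inside a $(d_1+1)$-plane $E$, place $K_2,\dots,K_n$ in a common $d_2$-dimensional subspace $F$, and use Proposition~\ref{12} together with the line $E\cap F=\mathbb{R}v$ to exhibit $g$ as a volume polynomial; your segment $L_1=A([0,rv])$ with calibrated length plays exactly the role of the paper's $\conv(v)$, and your determinant bookkeeping for $g_0$ and $g_1$ matches the paper's coefficient computation. The case $\deg_1(g)=0$ via Proposition~\ref{24} is also handled correctly.

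The step you single out as the key technical difficulty --- that $K_2,\dots,K_n$ may be taken inside a common $d_2$-dimensional subspace --- is precisely the step the paper dispatches with a bare ``without loss of generality'', so you are right to flag it; but it closes with a short argument and needs no matroid analysis or change of realization. Put $P:=K_2+\cdots+K_n$. Since every monomial of $f=x_1^{d_1}g$ has $x_1$-degree at least $d_1$, all mixed volumes $V(K_1^{d_1-1},K^{\beta})$ with $\beta_1=0$ and $|\beta|=d_2+1$ vanish, and multilinearity gives $V(K_1^{d_1-1},P^{d_2+1})=0$. By the positivity criterion for mixed volumes (Schneider, Thm.~5.1.8), this forces some subfamily of the multiset $\{K_1^{d_1-1},P^{d_2+1}\}$ to have Minkowski sum of dimension strictly less than its cardinality. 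Subfamilies using only copies of $K_1$ cannot be responsible, since $\dim K_1=d_1+1>d_1-1$; subfamilies using both bodies cannot either, since $K_1+P=\sum\mathcal{K}$ has dimension $d_1+d_2\geq(d_1-1)+(d_2+1)$ by the standing full-dimensionality assumption. Hence the failure must come from copies of $P$ alone, which yields $\dim P\leq d_2$, i.e.\ the desired common subspace $F$. With this lemma inserted, your argument is complete and coincides with the paper's.
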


\begin{proof}
	Let $f$ be the volume polynomial of the convex bodies $\mathcal{K}\coloneqq(K_1,\ldots,K_n)$ in $\mathbb{R}^{d_1+d_2}$. Without loss of generality we can assume $\dim(K_1)=d_1+1$ and $K_2,\ldots,K_n\subseteq V$ for a $d_2$-dimensional subspace $V\subseteq\mathbb{R}^{d_1+d_2}$. We denote by $U_1$ the $(d_1+1)$-dimensional subspace containing $K_1$ and get $U_1\cap V=\mathbb{R}v$ for a vector $v\in\mathbb{R}^{d_1+d_2}$. We can now write $U_1=U+\mathbb{R}v$ for a $d_1$-dimensional subspace $U\subseteq\mathbb{R}^{d_1+d_2}$ and without loss of generality we assume $U=V^\perp$, particularly $v\in U^\perp$. We write $C_1\coloneqq K_1|U$ and chose the length of $v$ such that we get
	\[\vol_{d_1+1}(K_1)=\vol_{d_1+1}(C_1+\conv(v))=\|v\|\vol_{d_1}(C_1).\]
	For an $\alpha\in\Delta_n^{d_2}$ with $\alpha_1=0$, we have
	\[\frac{(d_1+d_2)!}{d_1!\alpha!}V(K_1^{d_1},K_2^{\alpha_2},\ldots,K_n^{\alpha_n})=\frac{d_2!}{\alpha!}V_U(C_1)V_V(K_2^{\alpha_2},\ldots,K_n^{\alpha_n})
	\]
	and for an $\alpha\in\Delta_n^{d_2}$ with $\alpha_1=1$, we get
	\begin{align}
	&\frac{(d_1+d_2)!}{(d_1+1)!\hat{\alpha}}V(K_1^{d_1+1},K_2^{\alpha_2},\ldots,K_n^{\alpha_n}) \notag \\
	=&\frac{(d_1+d_2)!}{(d_1+1)!\hat{\alpha}}V((C_1+\conv(v))^{d_1+1},K_2^{\alpha_2},\ldots,K_n^{\alpha_n}) \notag \\
	=&\frac{(d_1+d_2)!}{(d_1+1)!\hat{\alpha}}\sum_{i=0}^{d_1+1}\binom{d_1+1}{i}V(C_1^{d_1+1-i},\conv(v)^i,K_2^{\alpha_2},\ldots,K_n^{\alpha_n}) \notag \\
	=&\frac{d_2!}{\hat{\alpha}!}\binom{d_1+d_2}{d_1}V(C_1^{d_1},\conv(v),K_2^{\alpha_2},\ldots,K_n^{\alpha_n}) \notag \\
	= &\frac{d_2!}{\alpha!}V_U(C_1)V_V(\conv(v),K_2^{\alpha_2},\ldots,K_n^{\alpha_n})\notag,
    \end{align}
	where $\hat{\alpha}$ refers to $(\alpha_2,\ldots,\alpha_n)$. Hence, we have
	\[f=x_1^{d_1}\vol_{d_1}(C_1)\vol(x_1\conv(v)+x_2K_2+\ldots+x_nK_n)\]
	and thus $g$ is a volume polynomial.
\end{proof}

\begin{cor}\label{26}
	Let $f\coloneqq gh\in V_{n_1+n_2-1}^{d_1+d_2}$ be a volume polynomial such that $g\in H_{n_1}^{d_1}$ and $h\in H_{n_2}^{d_2}$ only share one variable $x_1$ and the polynomial $h$ has at most degree $1$ in $x_1$. Then $h$ is a volume polynomial.
\end{cor}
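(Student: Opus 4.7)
The plan is to deduce this corollary from Proposition~\ref{25} by the same ``collapse the variables of $g$'' substitution trick that was used to pass from Proposition~\ref{22} to Corollary~\ref{23}, only now the substitution-stability of volume polynomials (Remark~\ref{16}(2)) replaces the Lorentzian version. Concretely, I would label the variables of $f$ as $x_1, x_2, \ldots, x_{n_1}, y_2, \ldots, y_{n_2}$ so that $g = g(x_1, \ldots, x_{n_1})$ and $h = h(x_1, y_2, \ldots, y_{n_2})$.

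First I would introduce new variables $z_1, z_2, \ldots, z_{n_2}$ and the $(n_1 + n_2 - 1) \times n_2$ matrix $A$ with non-negative entries that implements the substitution $x_i \mapsto z_1$ for $i = 1, \ldots, n_1$ and $y_j \mapsto z_j$ for $j = 2, \ldots, n_2$. By homogeneity of $g$, this makes $g(z_1, \ldots, z_1) = |g|_1 \, z_1^{d_1}$, where $|g|_1$ is the sum of the coefficients of $g$, while $h(x_1, y_2, \ldots, y_{n_2})$ becomes $h(z_1, z_2, \ldots, z_{n_2})$, so
\[
f(Az) = |g|_1 \, z_1^{d_1} \, h(z_1, z_2, \ldots, z_{n_2}).
\]
Remark~\ref{16}(2) then places this polynomial in $V_{n_2}^{d_1+d_2}$.

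Since $f \ne 0$ forces $g \ne 0$ and hence $|g|_1 > 0$, I would next use Proposition~\ref{11}(b) — applied with a homothety of suitable ratio — to cancel the constant $|g|_1$, leaving $z_1^{d_1} h$ as a genuine element of $V_{n_2}^{d_1+d_2}$. Since $\deg_{z_1} h \le 1$ by hypothesis, Proposition~\ref{25} applies directly to $z_1^{d_1} h$ and concludes that $h$ itself is a volume polynomial.

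The only mildly delicate point I anticipate is the justification that a positive scalar multiple of a volume polynomial is again a volume polynomial. This is not tautological and relies on Proposition~\ref{11}(b): rescaling the ambient space by a homothety with determinant $c$ multiplies the volume polynomial by $c$. Everything else — the non-negativity of the matrix $A$, the precise shape of $f(Az)$, and the final appeal to Proposition~\ref{25} — is routine, mirroring the passage from Proposition~\ref{22} to Corollary~\ref{23} in the Lorentzian setting.
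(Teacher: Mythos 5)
Your argument is correct and is exactly the derivation the paper intends: the corollary is stated without proof precisely because it follows from Proposition~\ref{25} by the same variable-collapsing substitution (via Remark~\ref{16}(2)) that the paper uses to pass from Proposition~\ref{21} to Corollary~\ref{23}. Your extra care with the scalar $|g|_1$, absorbed by a homothety via Proposition~\ref{11}(b), is a sound way to handle a detail the paper glosses over.
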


\begin{rem}
	Similarly to the corresponding results for Lorentzian polynomials, Proposition \ref{25} and Corollary \ref{26} were also motivated by the special case of multiaffine factors. Especially for volume polynomials, multiaffine polynomials allow an explicit description of the corresponding convex bodies due to Proposition \ref{11}. This allows a straight forward approach for the proof of Proposition \ref{25}, which can be generalized as seen above. 
\end{rem}

The above results illustrate how we can use our knowledge of Lorentzian polynomials to obtain new information on volume polynomials. But as mentioned before, the vast majority of results and operations for Lorentzian polynomials are not transferable to volume polynomials. Instead, we often need further restrictions or some adjusting of the results to be able to transfer the operations preserving the Lorentzian property to volume polynomials. We have seen one such example of a non-transferable result in Proposition \ref{17}. As we explicitly do not require the convex bodies to have non-empty interior, we can transfer at least parts of Proposition \ref{17} to volume polynomials.

\begin{prop}\label{27}
	Let $f\in V_n^d$ be the volume polynomial of $n$ convex bodies $\mathcal{K}\coloneqq(K_1,\ldots,K_n)$ in $\mathbb{R}^d$ and let us write
	\[f(x_1,\ldots,x_n)=\sum_{i=0}^dx_n^{d-i}f_i(x_1,\ldots,x_{n-1}).\]
	Then $f_d$ is a volume polynomial of degree $d$ and $f_{d-m}$ for $m\coloneqq\dim(K_n)$ is a volume polynomial of degree $d-m$.
\end{prop}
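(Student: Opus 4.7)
The plan is to expand $f$ in the monomial basis, read off the coefficients of $x_n^{d-i}$ for the two values $i=d$ and $i=d-m$, and recognise each of them as a volume polynomial of a suitable family of convex bodies derived from $\mathcal{K}$. From the definition of $\vol_{\mathcal{K}}$ one gets
\[f_i(x_1,\ldots,x_{n-1})=\sum_{\beta\in\Delta_{n-1}^i}\frac{d!}{\beta!\,(d-i)!}V(K_n^{d-i},K_1^{\beta_1},\ldots,K_{n-1}^{\beta_{n-1}})x^\beta,\]
so the task is to evaluate this sum in the two relevant cases.

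For $i=d$ the factor $K_n^{d-i}=K_n^0$ disappears and the right hand side is, by the Minkowski expansion applied to the remaining $n-1$ bodies, just $\vol(x_1K_1+\ldots+x_{n-1}K_{n-1})$. Thus $f_d$ is the volume polynomial of $(K_1,\ldots,K_{n-1})$ in $\mathbb{R}^d$; equivalently, this is $f(x_1,\ldots,x_{n-1},0)$.

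For $i=d-m$ the approach is more involved. Using Proposition~\ref{11}(b) I would first translate so that $K_n$ is contained in a fixed $m$-dimensional linear subspace $E\subseteq\mathbb{R}^d$. Since $K_n\subseteq E$, Proposition~\ref{12} applied with $L_1=\ldots=L_m=K_n$ splits every mixed volume in the displayed sum as
\[V(K_n^m,K_1^{\beta_1},\ldots,K_{n-1}^{\beta_{n-1}})=\binom{d}{m}^{-1}\vol_m(K_n)\cdot V_{E^\perp}\bigl((K_1|E^\perp)^{\beta_1},\ldots,(K_{n-1}|E^\perp)^{\beta_{n-1}}\bigr).\]
Substituting and simplifying the combinatorial factors via $\tfrac{d!}{m!}\binom{d}{m}^{-1}=(d-m)!$ collapses the sum to
\[f_{d-m}(x)=\vol_m(K_n)\cdot\vol_{E^\perp}\bigl(x_1(K_1|E^\perp)+\ldots+x_{n-1}(K_{n-1}|E^\perp)\bigr).\]

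The last step is to absorb the positive scalar $\vol_m(K_n)>0$ into the volume polynomial; this I would do by uniformly rescaling the projected bodies, setting $L_j:=\vol_m(K_n)^{1/(d-m)}(K_j|E^\perp)$, so that homogeneity of the $(d-m)$-dimensional volume yields $f_{d-m}(x)=\vol_{E^\perp}(x_1L_1+\ldots+x_{n-1}L_{n-1})$. Identifying $E^\perp\cong\mathbb{R}^{d-m}$, this exhibits $f_{d-m}$ as a volume polynomial of $n-1$ convex bodies in $\mathbb{R}^{d-m}$. The main obstacle is the combinatorial bookkeeping in the application of Proposition~\ref{12} together with the need to repackage the scalar factor $\vol_m(K_n)$; a minor side point is to verify that $(K_1+\ldots+K_{n-1})|E^\perp$ is $(d-m)$-dimensional (which follows from full-dimensionality of $\sum\mathcal{K}$ together with $K_n|E^\perp$ being a single point), so that the resulting family is itself full-dimensional and $f_{d-m}$ has the claimed degree.
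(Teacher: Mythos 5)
Your proposal is correct and follows essentially the same route as the paper: identify $f_d$ as $f(x_1,\ldots,x_{n-1},0)$, and for $f_{d-m}$ place $K_n$ in an $m$-dimensional subspace and apply the projection formula of Proposition~\ref{12} to split each mixed volume as $\vol_m(K_n)$ times a mixed volume of the projections onto the orthogonal complement. The only cosmetic difference is that you explicitly absorb the scalar $\vol_m(K_n)$ by rescaling the projected bodies, whereas the paper leaves it as a prefactor.
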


\begin{proof}
	We have $f_d=f(x_1,\ldots,x_{n-1},0)=\vol(x_1K_1+\ldots+x_{n-1}K_{n-1})$. Let $U\subseteq\mathbb{R}^d$ be the $m$-dimensional subspace with $K_n\subseteq U$. We have
	\begin{align}
		f_{d-m}&=\sum_{\alpha\in\Delta_{n-1}^{d-m}}\frac{d!}{m!\alpha!}V(\mathcal{K}^\alpha,K_n^m)x^\alpha \notag \\
		&=\sum_{\alpha\in\Delta_{n-1}^{d-m}}\frac{(d-m)!}{\alpha!}V_U(K_n^m)V_{U^\perp}((K|U^\perp)^\alpha)x^\alpha \notag \\
		&=\vol_m(K_1)\vol(x_1(K_1| U^\perp)+\ldots +x_{n-1}(K_{n-1}| U^\perp)). \notag \qedhere
	\end{align}
\end{proof}

\section{Volume Polnomials as a subset of Lorentzian Polynomials}
\label{Sec:Subset}

The \emph{Alexandrov-Fenchel inequality} (see \cite{A} and \cite{F}), being the first major restriction for sequences that can be realized as a sequence of coefficients of a volume polynomial, started a long line of further inequalities that can be deduced from it. The set of homogeneous polynomials with coefficients satisfying these inequalities contains the set of Lorentzian polynomials (\cite[Ex.~1.2(3)]{Gur} and \cite[Prop.~4.4]{BH}) which allows us to solely focus on this smaller set as Brändén and Huh found that every volume polynomial is Lorentzian (\cite[Thm.~4.1]{BH}).

We denote by $AF_n^d$ the set of homogeneous polynomials in $n$ variables of degree $d$ with non-negative coefficients satisfying the \emph{Alexandrov-Fenchel inequality} as well as the resulting inequalities \cite[p.~132]{Sh}
\[V(K^\alpha,K_i^{r-1},K_j)V(K^\alpha,K_i,K_j^{r-1})\geq V(K^\alpha,K_i^r)V(K^\alpha,K_j^r)\] for $\alpha\in\Delta_n^{d-r}$ and \[(-1)^r\det(V(K^\beta,K_i,K_j)_{i,j\in[r]})\leq0\] for $\beta\in\Delta_n^{d-2}$ and $r\leq n$. Considering the polynomial
\[g\coloneqq c_{111}x_1^3+3c_{223}x_2^2x_3+3c_{233}x_2x_3^2\] 
with $c_{111},c_{223},c_{233}>0$ which lies in $AF_3^3$ but not in $L_3^3$, one sees that focusing on the set $L_n^d$ instead of $AF_n^d$ already reduces the number of polynomials. Thus going forward, we solely regard the set $V_n^d$ as a subset of $L_n^d$.

Shephard proved \cite[Thm.~4]{Sh} that for any degree $d\in\mathbb{N}$, we have
\[V_2^d=L_2^d\]
and he further proved \cite[Thm.~5]{Sh} that for $(d+2)$-many variables, the inclusion \[V_{d+2}^d\subsetneq L_{d+2}^d\]
is strict. This generalized a result of Heine \cite[p.~119]{H} for polynomials in four variables and of degree two. To illustrate the idea behind the proof, we will mention Heine's example here.

\begin{ex}\label{31}\cite[p.~119]{H}
	The elementary symmetric polynomial in four variables of degree two \[f\coloneqq x_1x_2+x_1x_3+x_1x_4+x_2x_3+x_2x_4+x_3x_4\] is Lorentzian as can be seen straight forwardly by computing the Hessian matrix. If it were the volume polynomial of convex bodies $K_1,\ldots,K_4\subseteq\mathbb{R}^2$, these would have to be line-segments by Remark \ref{11}. Without loss of generality, we assume $K_i=\conv(0,e_i)$ for $i=1,2$ and $K_3=\conv(0,a)$, $K_4=\conv(0,b)$ for $a,b\in\mathbb{R}^2$. Computing the mixed volumes of these convex bodies leads to
	\[	1=\pm a_i = \pm b_i=\pm(a_1b_2-a_2b_1)\]
	for $i=1,2$ and thus to a contradiction.
\end{ex}

This example also illustrates why it is often useful to first refer to multiaffine polynomials as they allow an easy computation of the mixed volumes, which would otherwise be more difficult (see \cite{BF} and \cite{DGH}).

In the case of three variables, Heine \cite[p.~118]{H} proved
\[V_3^2=L_3^2.\] 
Later, Gurvits \cite[Conj.~5.1]{Gur} conjectured that this might be true for all degrees. This was disproved by Brändén and Huh (\cite[Fn.~15]{BH} and \cite[Ex.~14]{Hu}), who constructed the Lorentzian polynomial
\[f\coloneqq 14x_1^3+6x_1^2x_2+24x_1^2x_3+12x_1x_2x_3+6x_1x_3^2+3x_2x_3^2,\]
which cannot be a volume polynomial as the coefficients do not satisfy the \emph{reverse Khovanskii-Teissier inequality} (\cite[Thm.~5.7]{LX}). This inequality states that for three convex bodies $K_1,K_2,K_3$ in $\mathbb{R}^d$, the mixed volumes satisfy
\[\binom{d}{k}V(K_1^{d-k},K_2^k)V(K_1^k,K_3^{d-k})\geq V(K_1^d)V(K_2^k,K_3^{d-k})\]
for all non-negative integers $k\leq d$.

To give some geometric motivation for the inequality, we assume that we have three convex bodies $K_1,K_2,K_3\subseteq\mathbb{R}^d$ with $\dim(K_2)=k\leq d$. Let $U\subseteq\mathbb{R}^d$ be the $k$-dimensional subspace with $K_2\subseteq U$. Proposition \ref{12} leads us to
\begin{align}
	\binom{d}{k}V(K_2^k,K_3^{d-k})&=\vol_k(K_2)\vol_{d-k}(K_3| U^\perp), \notag\\\
	\binom{d}{k}V(K_1^{d-k},K_2^k)&=\vol_{d-k}(K_1| U^\perp)\vol_k(K_2), \notag \\
	\binom{d}{k}V(K_1^k,K_3^{d-k})&\geq \binom{d}{k}V(K_1^{k},(K_3| U^\perp)^{d-k})=\vol_k(K_1| U)\vol_{d-k}(K_3| U^\perp). \notag
\end{align}
By approximating the volume of $K_1$, we get
\begin{align}
	V(K_1^d)&\leq \vol_k(K_1| U)\vol_{d-k}(K_1|U^\perp) \notag \\
	&\leq \binom{d}{k}\frac{V(K_1^k,K_3^{d-k})V(K_1^{d-k},K_2^k)}{V(K_2^k,K_3^{d-k})}.\notag
\end{align}
As was communicated by Ivan Soprunov, this can be used to show that the above example of a polynomial in $L_3^3\setminus V_3^3$ by Brändén and Huh cannot be a volume polynomial (without using Hodge theory, as in their proof). In the general case, when the convex bodies $K_1,K_2,K_3\subseteq\mathbb{R}^d$ have dimension greater than $k$ or $d-k$, one cannot use the above technique to see that the mixed volumes satisfy the \emph{reverse Khovanskii-Teissier inequality}.

Using the above polynomials and our prior results, we are now in the position to prove our main theorem and thus to fully classify when the inclusion $V_n^d\subseteq L_n^d$ is strict. First, the case $n=1$ obviously leads to $V_1^d\subseteq L_1^d$ for all $d \in \mathbb{N}$. Second, the case $d=1$ obviously leads to $V_n^1\subseteq L_n^1$ for all $n \in \mathbb{N}$. The remaining cases are solved in the following.
 
\begin{thm}\label{32}
	Let $d,n \geq 2$. The sets $V_n^d$ and $L_n^d$ coincide if and only if $n=2$ or $(d,n)=(2,3)$. 
\end{thm}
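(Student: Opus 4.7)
My plan is to handle the two directions separately. The ``if'' direction requires nothing new: the equalities $V_2^d=L_2^d$ for every $d$ and $V_3^2=L_3^2$ are already quoted earlier in this section as Shephard's \cite[Thm.~4]{Sh} and Heine's \cite[p.~118]{H} results.

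For the ``only if'' direction I want to exhibit, for every pair $(n,d)$ with $n\geq 3$, $d\geq 2$, and $(n,d)\neq(3,2)$, an element of $L_n^d\setminus V_n^d$. The two base examples already displayed in Section~\ref{Sec:Subset} supply the seeds: Heine's elementary symmetric polynomial $e_2(x_1,\dots,x_4)\in L_4^2\setminus V_4^2$ from Example~\ref{31}, and the Brändén--Huh polynomial
\[
 f_{BH}\coloneqq 14x_1^3+6x_1^2x_2+24x_1^2x_3+12x_1x_2x_3+6x_1x_3^2+3x_2x_3^2 \in L_3^3\setminus V_3^3.
\]
The strategy is to propagate both through the operations developed in Section~\ref{Sec:Factors}.

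First I plan to climb in degree while staying in three variables. A glance at $f_{BH}$ shows $\deg_{x_2}(f_{BH})=1$, so for every $d\geq 3$ the polynomial
\[
 g_d\coloneqq x_2^{d-3}\,f_{BH}
\]
is a product of Lorentzian factors whose only shared variable is $x_2$, and in which $f_{BH}$ is linear in that variable. Proposition~\ref{15}(1) gives $g_d\in L_3^d$, while the contrapositive of Corollary~\ref{26} (with shared variable $x_2$) gives $g_d\notin V_3^d$, since otherwise one would recover $f_{BH}\in V_3^3$. This exhausts all $(3,d)$ with $d\geq 3$.

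Next I plan to climb in the number of variables. Given any $f\in L_k^d\setminus V_k^d$ and any $n\geq k$, regard $f$ as $\tilde f\in H_n^d$ that is independent of $x_{k+1},\dots,x_n$. Support M-convexity transfers verbatim, and the Hessian of each $\partial^\alpha\tilde f$ is either identically zero or a block matrix whose only nonzero block is the Hessian of a partial derivative of $f$; hence the one-positive-eigenvalue condition survives and $\tilde f\in L_n^d$. On the other hand, if $\tilde f$ were the volume polynomial of bodies $K_1,\dots,K_n\subseteq\mathbb{R}^d$, the specialization $x_{k+1}=\cdots=x_n=0$ would exhibit $f$ itself as the volume polynomial of $K_1,\dots,K_k$, contradicting $f\notin V_k^d$. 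Applied to the family $g_d$ this covers all $(n,d)$ with $n\geq 3$, $d\geq 3$; applied to $e_2(x_1,\dots,x_4)$ it covers all $(n,2)$ with $n\geq 4$; together with the base case $(3,3)$ the full list of excluded pairs is exhausted.

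None of the individual steps is deep. The real obstacle is conceptual: one has to notice that it is $x_2$, not $x_1$ or $x_3$, that appears linearly in $f_{BH}$, which is precisely what lets Corollary~\ref{26} lift the Brändén--Huh example to every degree $d\geq 3$ without ever leaving three variables. Once that is spotted, the remaining work is routine propagation and a block-matrix eigenvalue check.
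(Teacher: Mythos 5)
Your proof is correct and follows essentially the same strategy as the paper: the same two base cases from Shephard and Heine, the same two seed examples ($e_2(x_1,\dots,x_4)$ and the Br\"and\'en--Huh cubic), multiplication by a power of a single variable to climb in degree, and a non-negative linear substitution to climb in the number of variables. One difference is substantive and works in your favour. To raise the degree the paper multiplies the Br\"and\'en--Huh polynomial by $x_3^{k}$ and then invokes Proposition~\ref{25}; but that polynomial has degree $2$ in $x_3$ (see the terms $6x_1x_3^2$ and $3x_2x_3^2$), so the hypothesis $\deg(g)\leq 1$ in the distinguished variable is not met for that choice of variable. Your observation that $x_2$ is the variable in which the cubic is linear, and your use of $x_2^{\,d-3}$ as the multiplier, is exactly what is needed for Proposition~\ref{25} (equivalently Corollary~\ref{26}) to apply, so your version of this step is the one that is literally licensed by the stated results. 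The second difference is cosmetic: to add variables the paper substitutes $x_n\mapsto x_n+x_{n+1}$ and cites Proposition~\ref{15} and Remark~\ref{16}, whereas you pad with dummy variables and check M-convexity and the Hessian condition by hand; since padding is the substitution $f(Ax)$ for the non-negative matrix $A=(I_k\mid 0)$ and recovery is the specialization $x_{k+1}=\dots=x_n=0$, you could have cited those same two results and skipped the block-matrix verification. Both variants are valid and cover the same set of pairs $(n,d)$.
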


\begin{proof}
	Shephard \cite[Thm.~4]{Sh} proved that the sets are equal for $n=2$ and Heine \cite[p.~118]{H} proved the same for $(d,n)=(2,3)$. We define the polynomial
	\[f_k\coloneqq x_3^k(14x_1^3+6x_1^2x_2+24x_1^2x_3+12x_1x_2x_3+6x_1x_3^2+3x_2x_3^2)\]
	for $k\in\mathbb{N}_0$ which is a Lorentzian polynomial in $L_3^{3+k}$ by Proposition \ref{15} as it is the product of two Lorentzian polynomials. By the results of Brändén and Huh (\cite[Fn.~15]{BH} and \cite[Ex.~14]{Hu}), the second factor cannot be realized as a volume polynomial. By Proposition \ref{25}, the polynomial $f_k$ cannot lie in $V_3^{3+k}$ either. Hence, we have $V_3^{3+k}\subsetneq L_3^{3+k}$ for all $k\in\mathbb{N}_0$. The polynomial 
	\[f\coloneqq x_1x_2+x_1x_3+x_1x_4+x_2x_3+x_2x_4+x_3x_4\]
	 leads to the strict inclusion $V_4^2\subsetneq L_4^2$ (Shephard \cite[Thm.~5]{Sh} and Heine \cite[p.~119]{H}). Given $n$ with $V_n^d\subsetneq L_n^d$, we can deduce $V_{n+1}^d\subsetneq L_{n+1}^d$ by taking a polynomial $g\in L_n^d\setminus V_n^d$. By Proposition \ref{15}, the polynomial
	\[g(x_1,\ldots,x_n+x_{n+1})\]
	is a Lorentzian polynomial in $L_{n+1}^d$. If the new polynomial was a volume polynomial, the same would be true for $g$ by Remark \ref{16}.
\end{proof}

\vspace{25px}
\end{document}